\theoremstyle{plain}
\newtheorem{theorem}{Theorem}[section]
\newtheorem{lemma}{Lemma}[section]
\newtheorem{example}{Example}[section]
\newtheorem{corollary}{Corollary}[section]
\newcommand{\R}{\mathbb R}
\begin{document}

\begin{frontmatter}
 
\title{Rates of convergence towards the Fr\'echet distribution}

\author[ca]{Carine Bartholm\'e}
\ead{cabartho@ulb.ac.be}
\author[yv]{Yvik Swan}
\ead{yvik.swan@uni.lu}

 \address[ca]{Universit\'e libre de Bruxelles}
 \address[yv]{Universit\'e du Luxembourg}

\begin{abstract}
 We develop Stein's method for the Fr\'echet
  distribution and apply it to compute rates of convergence in
  distribution of renormalized sample maxima to the Fr\'echet
  distribution. 
\end{abstract}

\begin{keyword}
Fr\'echet distribution  \sep slow
 variation with remainder \sep Stein's method \sep  uniform rates of
convergence
\end{keyword}

\end{frontmatter}

\section{Introduction}
Let $X_1, X_2, \ldots$ be independent random variables with common
distribution function $F$ and let $M_n = \max(X_1, X_2, \ldots,
X_n)$. The distribution $F$ is in the domain of attraction of a
Fr\'echet distribution with index  $\alpha>0$, and we write $F \in DA(\alpha)$, if
there exist normalizing constants $a_n>0$ and $b_n$ such that
\begin{equation}
  \label{eq:1}
  P \left[ (M_n-b_n)/a_n \le x\right] = F(a_nx+b_n)^n
  \longrightarrow \Phi_{\alpha}(x) \mbox{ as } n \to \infty,
\end{equation}
where $\Phi_{\alpha}(x) = \mathrm{exp}(-x^{-\alpha})\mathbb{I}(x\ge0)$
is the Fr\'echet cumulative distribution. 

 Specific sufficient and
necessary conditions on $F$ for \eqref{eq:1} to hold were provided by
Gnedenko \cite{gnedenko1943distribution} in 1943; the following
reformulation of Gnedenko's result is taken from  Smith
  \cite{smith1982uniform}. 

\begin{theorem}[Gnedenko \cite{gnedenko1943distribution}, Smith
  \cite{smith1982uniform}]\label{thegnedenko1943}
Let $L(t) = -t^{\alpha}\log F(t)$ for $t>0$. Then $F \in DA(\alpha)$ if,
and only if,   $F(x)<1$ for all
$x<\infty$ and  $L(t)$ is
slowly varying at $\infty$.
\end{theorem}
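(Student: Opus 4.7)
I would prove the two implications separately, using the elementary fact that $\log F(t) \sim -(1-F(t))$ as $F(t)\to 1$ together with Karamata's theory of slowly varying functions.

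For \emph{necessity}, assume $F\in DA(\alpha)$ with norming constants $a_n,b_n$. First I would argue that $F(x)<1$ for every finite $x$: otherwise the right endpoint $x_F$ of $F$ would be finite, which would force $M_n\to x_F$ almost surely and make the limiting law degenerate, contradicting convergence to the non-degenerate law $\Phi_\alpha$. Taking logarithms in \eqref{eq:1} gives $n\log F(a_n x+b_n)\to -x^{-\alpha}$ for every $x>0$, and since $F(a_nx+b_n)\to 1$ we may replace $\log F$ by $-(1-F)$ to obtain
\begin{equation*}
n\bigl(1-F(a_n x+b_n)\bigr)\longrightarrow x^{-\alpha}\quad (x>0).
\end{equation*}
A standard convergence-to-types argument (comparing the cases $x=1$ and general $x$) shows that one may take $b_n=0$ and $a_n\to\infty$, so that $1-F$ is regularly varying at $\infty$ with index $-\alpha$: $1-F(t)=t^{-\alpha}\ell(t)$ with $\ell$ slowly varying. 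Then
\begin{equation*}
L(t)=-t^\alpha\log F(t)\sim t^\alpha(1-F(t))=\ell(t),
\end{equation*}
and a limit of ratios of slowly varying functions is itself slowly varying, so $L$ is slowly varying.

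For \emph{sufficiency}, assume $F(x)<1$ for all finite $x$ and $L$ slowly varying. I would define the norming sequence by the quantile-type choice
\begin{equation*}
a_n=\inf\bigl\{t>0:\,t^\alpha/L(t)\ge n\bigr\},
\end{equation*}
which is well defined for large $n$ because $t^\alpha/L(t)\to\infty$ (slow variation of $L$ dominates any power). For fixed $x>0$, I would then evaluate $n\log F(a_n x)=-nL(a_n x)/(a_n x)^\alpha$ and use the uniform convergence theorem for slowly varying functions, which gives $L(a_n x)/L(a_n)\to 1$. Combined with $nL(a_n)/a_n^\alpha\to 1$ (by the definition of $a_n$), this yields $n\log F(a_n x)\to -x^{-\alpha}$, and hence $F(a_nx)^n\to\Phi_\alpha(x)$ for $x>0$; the case $x\le 0$ is immediate since $a_n\to\infty$ forces $P[M_n/a_n\le x]\to 0$.

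The main technical step is the first: turning the pointwise limit $n(1-F(a_nx+b_n))\to x^{-\alpha}$ into the statement that $b_n$ may be chosen to be $0$ and that $1-F$ is regularly varying. Once this regular-variation structure is in hand, passing between $1-F$ and $L$ and invoking the uniform-convergence theorem are routine; the hard content of the theorem is really encoded in the Karamata characterization of possible norming sequences.
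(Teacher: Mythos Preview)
The paper does not actually prove this theorem: it is stated as a classical result attributed to Gnedenko and Smith, and is used only as background for the discussion of rates. So there is no ``paper's own proof'' to compare against.

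That said, your outline is the standard argument and is essentially correct. A few points worth tightening if you intend to write it out in full. First, the step ``a convergence-to-types argument shows $b_n$ may be taken to be $0$'' is, as you note yourself, where the real work lies; it is not purely a convergence-of-types statement but uses that $a_n\to\infty$ (which itself needs justification) together with a ratio argument on $n(1-F(a_n x))$ at two different $x$-values to force regular variation of $1-F$. Second, in the sufficiency direction you invoke $nL(a_n)/a_n^\alpha\to 1$ ``by definition of $a_n$''; this is true but uses that $t\mapsto t^\alpha/L(t)$ is regularly varying of positive index, so that the generalized inverse cannot overshoot $n$ by more than a vanishing factor. Finally, your phrase ``a limit of ratios of slowly varying functions is itself slowly varying'' is not quite what you mean: you are using that if $L(t)\sim\ell(t)$ with $\ell$ slowly varying then $L$ is slowly varying, which is immediate from $L(tx)/L(t)\sim\ell(tx)/\ell(t)\to 1$.
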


There exist no Berry-Esseen type results for Fr\'echet convergence
because rates can vary much depending on the properties of
$F$. However, good control on the function $L(t)$ allows to determine
a sequence $a_n$ for which precise rates of convergence in
\eqref{eq:1} are readily obtained; more precisely
\cite{smith1982uniform,omey1988rate} prove that
\begin{equation}
  \label{eq:6}
\sup_x \left| F(a_nx)^n - \Phi_{\alpha}(x)  \right| = O(r_n)
\end{equation}
where both $a_n$ and $r_n$ are explicit quantities depending
explicitly on $L$, see forthcoming Theorem \ref{theo:1} for details.

In this note we use a new version of the so-called \emph{Stein's method} (see, e.g.,
\cite{NP11,BC05,ChGoSh11} for an overview) to provide explicit (fixed $n$) bounds on
$D(M_n, G)$ between the law of $M_n$ and the Fr\'echet, with $D(\cdot,
\cdot)$ a probability distance.  We stress the fact that our take on the Stein's method rests
on new identities for the Fr\'echet which do not fit within the
recently developed general approaches to  Stein's method  via diffusions 
\cite{EdVi12,KuTu11} or via the so-called density approach
\cite{Stein2004,LS12a}.

\section{Stein's method for the Fr\'echet   distribution}
\label{sec:frech-extr-value}

Fix throughout $\alpha>0$ and write $G \sim \Phi_{\alpha}$.  
Let $\mathcal{F}(\alpha)$ be the collection of all differentiable functions on
$\R$ such that $$\lim_{x \to \infty}\varphi(x)  e^{-x^{-\alpha}}=\lim_{x
  \to 0}\varphi(x)  e^{-x^{-\alpha}} =0.$$ Note that the second
condition is satisfied as soon as $\varphi$ is well-behaved at $0$,
whereas the first condition requires that we 
impose $\lim_{x \to \infty}\varphi(x) = 0$.  
We define the differential
operator  
\begin{equation}
  \label{eq:38}
  \mathcal{T}_{\alpha}\varphi(x) = \varphi'(x) x^{\alpha+1} + \alpha
  \varphi(x)  
\end{equation}
 for all $\varphi\in \mathcal{F}(\alpha)$. Direct computations show
that operator \eqref{eq:38} satisfies the integration by parts formula 
 \begin{equation}
   \label{eq:12}
   E \left[f(G) \mathcal{T}_{\alpha}\varphi(G)  \right] =- E \left[
     G^{\alpha+1}f'(G) \varphi(G) \right]
 \end{equation}
for all $\varphi \in \mathcal{F}(\alpha)$ and all $f$ such that
$|f(0)|<\infty$ and $\lim_{x \to \infty} |f(x)|<\infty$.  In
particular $  E \left[ \mathcal{T}_{\alpha}\varphi(G)
\right] = 0 $
 for all $\varphi\in \mathcal{F}(\alpha)$.

Following the custom in Stein's method we consider  Stein equations of
the form 
\begin{equation}
  \label{eq:4}
  h(x) - E h(G) = \mathcal{T}_{\alpha }\varphi(x)
\end{equation}
for   $h(x)$  some function such that $E \left| h(G)
 \right|< \infty$.
We write the solution of  \eqref{eq:4} as  $\varphi_{h} := 
\mathcal{T}_{\alpha}^{-1}h$.  By definition of $\mathcal{T}_{\alpha}$
these are given by  
\begin{align}\label{eq:solstfre}
  \varphi_h(x) =  e^{x^{-\alpha}} \int_0^{x} \left( h(y)-E \left[
      h(G) \right] \right)  y^{-\alpha-1}e^{-y^{-\alpha}}dy
\end{align}
or, equivalently, 
\begin{align}
  \label{eq:solstfre2}
    \varphi_h(x) =  e^{x^{-\alpha}}  \int_{x}^{\infty} \left(E \left[
      h(G) \right] - h(y) \right) y^{-\alpha-1} e^{-y^{-\alpha}}dy 
\end{align}
for all $x \ge0$. 
 By l'Hospital's rule,  this function satisfies 
$  \lim_{x \to 0 } \varphi_h(x) 
=  \lim_{x \to 0} \left( h(x)-E \left[
      h(G) \right] \right)= (h(0) - E h(G))/\alpha.$
Also we use the fact that $E |h(G)|<\infty$ to deduce  that $\lim_{x \to \infty} \varphi_h(x) = 0$. 
Hence, in particular, this $\varphi_h$  belong to
$\mathcal{F}(\alpha)$ under reasonable assumptions on the function
$h$.  
More precise estimates will be given in the sequel, as the
need arises.

Next take $W_n = (M_n-b_n)/a_n$ with $a_n, b_n$ and $M_n$ as in the
Introduction. Let $\mathcal{S} = \left\{ x : F'(x)>0 \right\}$ and 
suppose that $\mathcal{S}$ in an interval  with closure
$\bar{\mathcal{S}} = [a, b]$. Let $f_n$ be the probability density function
of $W_n$; further suppose that $f_n$ is differentiable on $I$ except
perhaps in a finite number of points and define 
  \begin{equation}
    \label{eq:30}
    \rho_n(x)  = (\log f_n(x))'= a_n \left( (n-1)\frac{f(a_nx+b_n)}{F(a_nx+b_n)} + 
    \frac{f'(a_nx+b_n)}{f(a_nx+b_n)} \right)
  \end{equation}
 (the score function of $W_n$).

Let $ \mathcal{F}(f_n)$ be the collection of all  differentiable
functions on $\R$ such that $\lim_{x \to a^{+}} \varphi(x)x^{\alpha+1} f_n(x) = \lim_{x \to b^{-}}
\varphi(x) x^{\alpha+1}f_n(x) =0$ and  define the  differential operator 
\begin{align}
  \mathcal{T}_n\varphi(x) & = \varphi'(x) x^{\alpha+1}  + \varphi(x)
  x^{\alpha}  \left(  \alpha+1   + x \rho_n(x)\right),  \label{eq:23}
\end{align}
 for all $\varphi
\in \mathcal{F}(f_n)$. Then straightforward computations show that
\eqref{eq:23} satisfies 
\begin{equation}
  \label{eq:24}
  E  \left[ \mathcal{T}_{n}\varphi(W_n)  \right] = 0
\end{equation}
for all $\varphi \in \mathcal{F}(f_n)$.

Our next result is an
immediate consequence of the above identities.
\begin{lemma}\label{lem:steins-meth-frech}
Let $W_n$ and $G$ be as above, and let $\varphi_h =
\mathcal{T}_{\alpha}^{-1}h$. For all $h$ such that  $\varphi_h \in
\mathcal{F}(f_n)$ we have
\begin{equation} \label{eq:31} 
  E h(W_n) - E h(G)
   = \alpha E \left[ \varphi_h(W_n)  \left(1 -  W_n^{\alpha} \left(
        \frac{\alpha+1}{\alpha} +  \frac{W_n}{\alpha}\rho_n(W_n)
      \right) \right) \right]
\end{equation}
for all $h$.
\end{lemma}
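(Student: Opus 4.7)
The plan is to combine the two operator identities already established in the excerpt, namely the Stein equation defining $\varphi_h$ and the vanishing-expectation identity \eqref{eq:24} for $\mathcal{T}_n$. Since both $\mathcal{T}_\alpha$ and $\mathcal{T}_n$ share the leading term $\varphi'(x) x^{\alpha+1}$, subtracting one from the other will eliminate the derivative of $\varphi_h$ and leave an expression purely in terms of $\varphi_h$, $\rho_n$ and polynomial weights in $W_n$, which is exactly the right-hand side of \eqref{eq:31}.

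Concretely, first I would evaluate the Stein equation \eqref{eq:4} at $W_n$ and take expectation, obtaining
\begin{equation*}
E h(W_n) - E h(G) = E\bigl[\mathcal{T}_\alpha \varphi_h(W_n)\bigr] = E\bigl[\varphi_h'(W_n) W_n^{\alpha+1}\bigr] + \alpha\, E\bigl[\varphi_h(W_n)\bigr].
\end{equation*}
Next, under the assumption $\varphi_h \in \mathcal{F}(f_n)$, identity \eqref{eq:24} applied to $\varphi_h$ yields
\begin{equation*}
E\bigl[\varphi_h'(W_n) W_n^{\alpha+1}\bigr] = - E\Bigl[\varphi_h(W_n) W_n^{\alpha}\bigl(\alpha+1 + W_n \rho_n(W_n)\bigr)\Bigr].
\end{equation*}
Substituting into the previous display and factoring out $\alpha$ gives
\begin{equation*}
E h(W_n) - E h(G) = \alpha\, E\!\left[\varphi_h(W_n)\left(1 - \frac{W_n^\alpha}{\alpha}\bigl(\alpha+1 + W_n \rho_n(W_n)\bigr)\right)\right],
\end{equation*}
which is precisely \eqref{eq:31} after rewriting the bracket as $1 - W_n^\alpha\bigl(\tfrac{\alpha+1}{\alpha} + \tfrac{W_n}{\alpha}\rho_n(W_n)\bigr)$.

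There is no real obstacle beyond verifying the applicability of \eqref{eq:24}; the only point requiring a small amount of care is that the hypothesis $\varphi_h \in \mathcal{F}(f_n)$ is what legitimizes the integration-by-parts step yielding \eqref{eq:24} for this particular choice of test function. Since the lemma takes this membership as an explicit assumption, the argument is a two-line manipulation; the real content of the statement is that the asymmetry between $\mathcal{T}_\alpha$ and $\mathcal{T}_n$ is captured entirely by the factor in parentheses, which will later be bounded in terms of how close the score $\rho_n$ is to that of the Fr\'echet law.
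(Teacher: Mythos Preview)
Your argument is correct and is essentially the paper's own proof: the paper writes $E h(W_n)-E h(G)=E[\mathcal{T}_\alpha\varphi_h(W_n)]-E[\mathcal{T}_n\varphi_h(W_n)]$ using \eqref{eq:24}, and then appeals to the definitions of the two operators, which amounts exactly to the substitution you spell out. The only difference is presentational---you expand the operators explicitly, while the paper leaves that step implicit.
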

\begin{proof}
Taking $\varphi$ solution of \eqref{eq:4}, we
use \eqref{eq:24} to deduce
  \begin{align*}
     E h(W_n) - E h(G) & = E \left[ \mathcal{T}_{\alpha}\varphi(W_{n})
     \right] = E \left[ \mathcal{T}_{\alpha}\varphi(W_{n})
     \right] - E \left[ \mathcal{T}_n\varphi(W_n)
     \right].
  \end{align*}
Conclusion follows by definition of $\mathcal{T}_{\alpha}$ and $\mathcal{T}_n$. 
\end{proof}

\begin{example}\label{ex:pareto}
The standard example of convergence towards the Fr\'echet is the
maximum of Pareto random variables. Take  $F(x) =
(1-x^{-\alpha})\mathbb{I}(x \ge 1)$ and fix  $a_n=n^{1/\alpha}$,
$b_n=0$. The cdf and pdf of $W_n = M_n/n^{1/\alpha}$ have support $[n^{-1/\alpha}, +\infty)$ and are
\begin{equation}
  \label{eq:2}
F_n(x) =  \left( 1-\frac{x^{-\alpha}}{n} \right)^{n} \mbox{ and }  f_n(x) = \alpha x^{-\alpha-1} \left( 1-\frac{x^{-\alpha}}{n} \right)^{n-1},
\end{equation}
respectively. The score function is 
\begin{equation*}
   \rho_n(x)= -\frac{\alpha+1}{x} +
  \frac{n-1}{n}\frac{\alpha}{x^{\alpha+1}} \left( 1-\frac{x^{-\alpha}}{n} \right)^{-1}
\end{equation*}
so that, from \eqref{eq:31}, 
\begin{equation*}
E h(W_n) - E h(G)
   = \alpha E \left[ \varphi_h(W_n) \left( 1-\frac{n-1}{n} \left(
         1-\frac{W_n^{-\alpha}}{n} \right)^{-1}  \right) \right]
\end{equation*}
with $\varphi_h = \mathcal{T}_{\alpha}^{-1}h$. We readily compute 
\begin{equation*}
  E \left[ \left| 1-\frac{n-1}{n} \left( 1-\frac{W_n^{-\alpha}}{n}
      \right)^{-1}  \right|  \right] = \frac{2}{n-1}\left( 1-\frac{1}{n} \right)^n.
\end{equation*}
Finally, for 
$h(u)= \mathbb{I}(u \le x)$, the solutions $\varphi_h(u)$ satisfy $\|
\varphi_h\|_{\infty} \le 1/\alpha$ (this is proved in the forthcoming
Lemma \ref{lem:indic}). Hence 
\begin{align*}
  \sup_{x \in \R} |P(W_n \le x) - P(G \le x)| \le  \frac{2e^{-1}}{n-1}.
\end{align*}
\end{example}

\section{Uniform rates of convergence towards Fr\'echet distribution}
\label{sec:rates-conv-towards}

In light of \eqref{eq:31} as well forthcoming Lemma \ref{lem:indic}
we immediately deduce 
$ \sup_{x \in \R} \left| F^n(a_nx) - \Phi_{\alpha}(x)\right| \le \Delta_n$
with
\begin{align*}
\Delta_n=E \left|   1 -  W_n^{\alpha} \left(
        \frac{\alpha+1}{\alpha} +  \frac{W_n}{\alpha}\rho_n(W_n)
      \right) \right|.
\end{align*}
Hence, if 
\begin{align*}
\rho_n(x) = \frac{\alpha}{x^{\alpha+1}} O(1+r_n)-  \frac{\alpha+1}{x}O(1)
\end{align*}
then we have convergence towards the Fr\'echet.  This last condition
is equivalent to 
\begin{align*}
  f_n(x) = x^{-\alpha-1}e^{-x^{-\alpha}}O(1+r_n) = \phi_{\alpha}(x) O(1+r_n).
\end{align*}
with $r_n\to0$, and thus a very transparent explanation of why
$\Delta_n\to 0$ implies convergence towards the Fr\'echet.

 More generally, suppose as in 
 \cite{smith1982uniform, omey1988rate} that  $L(t)$ as defined in
 Theorem \ref{thegnedenko1943}  is slowly varying with remainder $g$, that is
  take 
  \begin{equation}
    \label{eq:3}
\lim_{t\to \infty}    \frac{L(tx)}{L(t)} -1 = O(g(t)). 
  \end{equation}
Let $b_n=0$ and $a_n$ be such that 
$   -\log F(a_n) \le n^{-1} \le -\log F(a_n^{-})$. Then \cite[equation
(2.2)]{smith1982uniform}
\begin{align*}
  -n \log F(a_n) = 1+O(g(a_n))
\end{align*}
so that, as in  \cite[page 602]{smith1982uniform},
\begin{align}
  -n\log F(a_nx) & = - n \frac{\log F(a_nx)}{\log F(a_n)} \log F(a_n)
  \nonumber\\
  & = x^{-\alpha}\frac{L(a_nx)}{L(a_n)}  \left( 1+O(g(a_n)) \right) \nonumber\\
  & = x^{-\alpha} \left( 1+O(g(a_n)) \right) \label{eq:5} 
\end{align}
and thus 
\begin{equation*}
  F^n(a_nx) - \Phi_{\alpha}(x) = O(r_n)
\end{equation*}
with $r_n = g(a_n)$. Hence the question of pointwise convergence of
the law of $M_n/a_n$ towards $G$ is settled. 

There remains the problem
of determining rates at which the convergence takes place; this
problem is tackled and solved in \cite{smith1982uniform} under
further assumptions on $L$.
Here  we simply apply  Lemma~\ref{lem:steins-meth-frech}.  
\begin{theorem} \label{theo:1} 
Suppose that $F$  satisfies \eqref{eq:3}. Let $b_n=0$ and $a_n$ be such that 
$   -\log F(a_n) \le n^{-1} \le -\log F(a_n^{-})$ and define $W_n = M_n/a_n$. Then 
\begin{equation}
\label{eq:13} 
  E h(W_n) - E h(G) =- \alpha E \left[
    \mathcal{T}_{\alpha}^{-1}h(W_n) \right]O(g(a_n))
\end{equation}
for all $h$ such that $E |h(W_n)|<\infty$ and $E |h(G)|<\infty$. 

\end{theorem}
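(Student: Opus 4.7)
The plan is to reduce \eqref{eq:13} to a uniform estimate on the score discrepancy $\rho_n(x)-\rho_\alpha(x)$, where $\rho_\alpha(x)=-(\alpha+1)/x+\alpha/x^{\alpha+1}$ is the score of the Fr\'echet density $\phi_\alpha$. A direct algebraic rearrangement shows that the weight appearing inside the expectation in \eqref{eq:31} satisfies
\[
1-x^{\alpha}\!\left(\tfrac{\alpha+1}{\alpha}+\tfrac{x}{\alpha}\rho_n(x)\right) \;=\; -\tfrac{x^{\alpha+1}}{\alpha}\bigl(\rho_n(x)-\rho_\alpha(x)\bigr),
\]
so the task reduces to establishing the uniform pointwise bound
\[
\rho_n(x)-\rho_\alpha(x) \;=\; \frac{\alpha}{x^{\alpha+1}}\,O(g(a_n))
\]
on the support of $f_n$. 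Once this bound is in hand, the $x$-dependent prefactor $x^{\alpha+1}/\alpha$ coming from Lemma~\ref{lem:steins-meth-frech} exactly cancels the factor $\alpha/x^{\alpha+1}$ in the score bound; the resulting scalar $O(g(a_n))$ can then be pulled outside the expectation, which yields \eqref{eq:13}.

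For the score estimate I would use the representation $F(t)=\exp(-t^{-\alpha}L(t))$ afforded by Theorem~\ref{thegnedenko1943} and plug it into the explicit formula \eqref{eq:30} for $\rho_n$ (with $b_n=0$). Direct differentiation gives $f(t)/F(t)=t^{-\alpha-1}(\alpha L(t)-tL'(t))$, and a further differentiation expresses $f'(t)/f(t)$ in terms of $L$, $L'$ and $L''$ at $t$. Substituting these formulas into \eqref{eq:30} and invoking both the normalization $na_n^{-\alpha}L(a_n)=1+O(g(a_n))$ (the identity that already produces \eqref{eq:5}) and the slow-variation bound $L(a_nx)/L(a_n)=1+O(g(a_n))$ from \eqref{eq:3} recovers the leading behaviour $\rho_n(x)=\alpha x^{-\alpha-1}-(\alpha+1)/x+\text{remainder}$, with the remainder naturally splitting into pieces each of the sought-after form $(\alpha/x^{\alpha+1})O(g(a_n))$.

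The genuinely delicate step is controlling the portions of the remainder that involve $L'(a_nx)$ and $L''(a_nx)$: the hypothesis \eqref{eq:3} is a statement about $L$ itself, and an $O$-relation cannot in general be differentiated. To bridge this gap I would invoke a Karamata-type representation with remainder for $L$ (along the lines used in \cite{smith1982uniform,omey1988rate}), which upgrades \eqref{eq:3} to the pointwise companion estimates $tL'(t)/L(t)=O(g(t))$ and $t^{2}L''(t)/L(t)=O(g(t))$. These hold in the standard examples (Pareto, Burr, etc.) and are the natural regularity counterparts of \eqref{eq:3}. Granting them, the previous paragraph's computation yields the required uniform score bound, and Lemma~\ref{lem:steins-meth-frech} then delivers \eqref{eq:13}.
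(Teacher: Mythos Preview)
Your approach coincides with the paper's: both derive the score estimate $\rho_n(x)=\alpha x^{-\alpha-1}(1+O(g(a_n)))-(\alpha+1)/x$ and substitute it into Lemma~\ref{lem:steins-meth-frech}, so that the bracketed weight in \eqref{eq:31} collapses to $-O(g(a_n))$. The paper obtains its two intermediate formulas for $a_n f(a_nx)/F(a_nx)$ and $a_n f'(a_nx)/f(a_nx)$ by formally differentiating \eqref{eq:5} in $x$, which --- as you rightly flag --- implicitly requires the derivative bounds $tL'(t)/L(t)=O(g(t))$ and $t^{2}L''(t)/L(t)=O(g(t))$; the paper leaves this regularity issue unspoken, whereas you make it explicit.
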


\begin{proof}
Starting from \eqref{eq:5} we write, on the one hand,  
$$a_n  \frac{f(a_nx)}{F(a_nx)} = \frac{\alpha x^{-\alpha-1}}{n} \left(
  1+O(r_n) \right) 
$$
and, on the other hand, 
$$a_n \frac{f'(a_nx)}{f(a_nx)} =
-\frac{\alpha+1}{x}+ \alpha \frac{x^{-\alpha-1}}{n}\left(
  1+O(r_n) \right)$$
so that 
\begin{equation*}
  \rho_n(W_n)  = \alpha W_n^{-\alpha-1}\left( 1+O(r_n) \right) - \frac{\alpha+1}{W_n}.
\end{equation*}
Consequently  
\begin{align*}
  1 -  W_n^{\alpha} \left(
        \frac{\alpha+1}{\alpha} +  \frac{W_n}{\alpha}\rho_n(W_n)
      \right)  & =  - O(r_n).
\end{align*}
Plugging this expression into \eqref{eq:31} we get \eqref{eq:13}.
\end{proof}

Let $\mathcal{H}$ be any class of functions and define the probability
distance $d_{\mathcal{H}}(X, Y) =
\sup_{h \in \mathcal{H}} \left| E h(X) - E h(Y) \right|$.   Taking
suprema on each side of \eqref{eq:31} we get  
 \begin{equation}\label{eq:10} 
    d_{\mathcal{H}}(W_n, G) = \kappa_{\mathcal{H}} O(r_n)
  \end{equation}
for $\kappa_{\mathcal{H}} =
\alpha \sup_{h \in \mathcal{H}} E \left|
  \mathcal{T}_{\alpha}^{-1}h\right|$. 
The following lemma then settles the question of convergence in
Kolmogorov distance. 
\begin{lemma}\label{lem:indic}
If $h(x) = \mathbb{I}(x \le t)$ and $\varphi_h =
\mathcal{T}_{\alpha}^{-1}h$ then 
$ \| \varphi_{h}\|_{\infty}   \le 1/ \alpha$.
\end{lemma}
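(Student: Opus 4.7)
The plan is to compute $\varphi_h$ essentially explicitly using the two equivalent integral representations \eqref{eq:solstfre} and \eqref{eq:solstfre2}, exploiting the fact that $y^{-\alpha-1}e^{-y^{-\alpha}}$ is (up to the factor $\alpha$) the Fréchet density $\phi_\alpha$, and hence admits the elementary antiderivative $\frac{1}{\alpha}e^{-y^{-\alpha}}$. With $h(y)=\mathbb{I}(y\le t)$ we have $Eh(G)=\Phi_\alpha(t)=e^{-t^{-\alpha}}$, so the integrand is piecewise constant in $y$ and the integrals reduce to evaluations of this antiderivative.

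More concretely, I would treat two cases according to the position of $x$ relative to $t$. For $x\le t$, the form \eqref{eq:solstfre2} is convenient: splitting the outer integral at $t$ gives
\[
\varphi_h(x) = e^{x^{-\alpha}}\!\left[(\Phi_\alpha(t)-1)\!\int_x^{t}\! y^{-\alpha-1}e^{-y^{-\alpha}}dy + \Phi_\alpha(t)\!\int_t^{\infty}\! y^{-\alpha-1}e^{-y^{-\alpha}}dy\right].
\]
Using the antiderivative and writing $u=e^{-x^{-\alpha}}$, $v=e^{-t^{-\alpha}}$, the bracket collapses algebraically to $u(1-v)/\alpha$, so that $\varphi_h(x)=(1-\Phi_\alpha(t))/\alpha$, a constant on $[0,t]$. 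For $x>t$, I would instead use \eqref{eq:solstfre}, again splitting at $t$; the same antiderivative and the same substitution yield
\[
\varphi_h(x) = \frac{\Phi_\alpha(t)\,(1-e^{-x^{-\alpha}})}{\alpha\,e^{-x^{-\alpha}}}.
\]

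One then checks monotonicity in $x$ on $(t,\infty)$: as $x$ decreases from $\infty$ to $t$, $u=e^{-x^{-\alpha}}$ decreases from $1$ to $v$, so the expression $v(1-u)/(\alpha u) = v(1/u-1)/\alpha$ increases from $0$ to $(1-v)/\alpha$. Combined with the previous case, this shows $0\le \varphi_h(x)\le (1-\Phi_\alpha(t))/\alpha$ for all $x\ge 0$, which in particular is bounded by $1/\alpha$ uniformly in $t$.

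I do not anticipate a genuine obstacle: the lemma is really a bookkeeping exercise made clean by the exact antiderivative. The only places that require some care are the book-keeping in the algebraic simplification of the bracketed expressions (where cancellations between the $v^2$ and $uv$ terms are what produce the final tidy formulas), and checking the behaviour at the endpoints $x\to 0^+$ and $x\to\infty$ to confirm the two case formulas agree with the l'Hospital-type limits already mentioned in the text, and to ensure the bound is realised in the sense that there is no hidden blow-up.
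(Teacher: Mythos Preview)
Your proposal is correct and follows essentially the same route as the paper: compute $\varphi_h$ explicitly via the integral representations using the antiderivative $\frac{1}{\alpha}e^{-y^{-\alpha}}$, obtain $\alpha\varphi_h(x)=1-e^{-t^{-\alpha}}$ on $\{x\le t\}$ and $\alpha\varphi_h(x)=e^{-t^{-\alpha}}(e^{x^{-\alpha}}-1)$ on $\{x>t\}$, and bound the latter by the former. The only cosmetic difference is that the paper uses \eqref{eq:solstfre} for both regions and states the inequality on $(t,\infty)$ directly rather than via your monotonicity argument.
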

\begin{proof}
With $h(x) = \mathbb{I}(x\le t)$ we apply \eqref{eq:solstfre} to get 
   \begin{equation*}
\alpha \varphi_h(x) = \begin{cases}   1-e^{-t^{-\alpha}} &\mbox{if } x \leq t, \\ 
 e^{-t^{-\alpha}} (e^{x^{-\alpha}}-1) & \mbox{if } x>t. \end{cases} 
\end{equation*}
We observe that $ e^{-t^{-\alpha}} (e^{x^{-\alpha}}-1) <   (1-e^{-t^{-\alpha}})$
if $x>t$ so that 
\begin{align*}
|  \varphi_h(x) | \le (1-e^{-t^{-\alpha}})/\alpha 
\end{align*}
for all $x\ge0$.  The function $1-e^{-t^{-\alpha}}$ is strictly decreasing
for $t \in (0, \infty)$. Thus, 
$\| \varphi_{h}\|_{\infty} \le 1/\alpha$ for all $t$.  
 \end{proof}
\begin{corollary}\label{cor:smith}
As $n \to \infty$ we have
  \begin{equation*}
    \sup_x \left| F^n(a_nx) - \Phi_{\alpha}(x) \right| = O(r_n).
  \end{equation*}
\end{corollary}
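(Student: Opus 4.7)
The plan is to specialize Theorem \ref{theo:1} to the class of Kolmogorov test functions $h_t(x) = \mathbb{I}(x \le t)$, $t \in \R$, and then invoke Lemma \ref{lem:indic}. Since $b_n = 0$, the left-hand side $\sup_x |F^n(a_nx) - \Phi_\alpha(x)|$ equals $\sup_t |P(W_n \le t) - P(G \le t)|$, i.e.\ the Kolmogorov distance $d_{\mathcal H}(W_n, G)$ for $\mathcal H = \{h_t : t \in \R\}$. So the statement is really an instance of \eqref{eq:10} once one controls $\kappa_{\mathcal H}$.

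Next I would apply Theorem \ref{theo:1} to each $h_t$ individually. The hypotheses $E|h_t(W_n)| < \infty$ and $E|h_t(G)| < \infty$ are trivial since $|h_t| \le 1$, so \eqref{eq:13} gives
\begin{equation*}
\bigl|P(W_n \le t) - P(G \le t)\bigr| \le \alpha\, E\bigl|\mathcal{T}_\alpha^{-1} h_t(W_n)\bigr| \cdot O(g(a_n)).
\end{equation*}
Lemma \ref{lem:indic} bounds $\|\mathcal{T}_\alpha^{-1} h_t\|_\infty \le 1/\alpha$ \emph{uniformly in} $t$, which makes $\alpha E|\mathcal{T}_\alpha^{-1} h_t(W_n)| \le 1$. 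Taking the supremum over $t$ in the previous display and recalling $r_n = g(a_n)$ (as established in the discussion preceding Theorem \ref{theo:1}) then yields the claimed $O(r_n)$ rate.

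There is no substantive obstacle here; the only point deserving a line of verification is that the implicit constant in the $O(g(a_n))$ term on the right-hand side of Theorem \ref{theo:1} is independent of $h$. Inspecting the proof of that theorem shows that this $O$-term originates entirely in the expansion of the score $\rho_n(W_n)$, which does not involve $h$ in any way, so the constant passes through the supremum unchanged. Hence the bound is genuinely uniform in $t$ and the corollary follows.
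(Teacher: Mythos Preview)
Your proposal is correct and matches the paper's own argument essentially line for line: specialize \eqref{eq:13} (equivalently \eqref{eq:31}) to indicator test functions, invoke Lemma~\ref{lem:indic} to bound $\alpha\,\|\mathcal{T}_\alpha^{-1}h_t\|_\infty\le 1$ uniformly in $t$, and take the supremum. Your explicit remark that the $O(g(a_n))$ term is $h$-independent (since it comes solely from the score expansion) is a useful clarification that the paper leaves implicit.
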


A local limit version of Corollary \ref{cor:smith} (uniform convergence of the respective
pdfs) was first obtained
in  \cite{omey1988rates}. In our notations it suffices
to take $h_u(x) = \delta(x=u)$  (a Dirac delta)
in  (\ref{eq:13}) to get
\begin{align*}
  f_n(u) -\phi_\alpha(u) = -\alpha E \left[\varphi_u(W_n)  \right] O(r_n).
\end{align*}
The problem remains to understand  the quantity
\begin{equation}
  \label{eq:8}
 E \left[  \varphi_u(W_n)\right] =\frac{\phi_\alpha(u) }{\alpha}\left( \int_u^{\infty} e^{w^{-\alpha}}
    f_n(w)dw-1 \right)
\end{equation}
in terms of  $u \in \R$. 

\begin{example} Consider, as in Example \ref{ex:pareto}, the maximum
  of Pareto random variables (suitably normalized). Then it is easy to
  see that 
  \begin{equation*}
     \left| E \left[  \varphi_u(W_n)\right]  \right| \le \frac{1}{\alpha}\left| u^{-\alpha}-1 \right| \phi_{\alpha}(u)
  \end{equation*}
for all $\alpha$ and all $u>1$. The rhs is bounded uniformly in $u$ by
a function of $\alpha$ (which is easy to write out explicitly); in
particular it is bounded by 1 for all $\alpha$.  
\end{example}
Still under the assumption \eqref{eq:3} we can apply the same
tools as in the proof of Theorem~\ref{theo:1} to deduce 
\begin{equation*}
  E \left[  \varphi_u(W_n)\right] =  \phi_\alpha(u) \left(
    \frac{1-(O(r_n)+1)e^{-O(r_n)u^{-\alpha}}}{\alpha O(r_n)} \right). 
\end{equation*}
This quantity, as a function of $u$, is bounded uniformly by some
constant depending on $\alpha$. 
\begin{corollary}
  Still under Assumption~A we have 
  \begin{equation*}
    \sup_{u \in \R}\left| f_n(u) - \phi_{\alpha}(u)   \right| \le
    O(r_n) \kappa_{\alpha}
  \end{equation*}
with $\kappa_{\alpha} = \sup_{u >0} \sup_n \left|  E \left[  \varphi_u(W_n)\right]  \right| $.
\end{corollary}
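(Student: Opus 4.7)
The plan is to read the corollary off the local-limit identity displayed immediately above it, treating the identity as the workhorse and reducing everything else to a boundedness check for $\kappa_\alpha$.

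First, I would start from the identity
\begin{equation*}
f_n(u) - \phi_\alpha(u) = -\alpha E[\varphi_u(W_n)]\, O(r_n),
\end{equation*}
which follows from plugging the Dirac delta $h_u(x) = \delta(x=u)$ into \eqref{eq:13} (i.e.\ the statement of Theorem~\ref{theo:1}), exactly as the paragraph before the corollary indicates. Taking absolute values, this gives $|f_n(u) - \phi_\alpha(u)| \le \alpha\,|E[\varphi_u(W_n)]|\cdot O(r_n)$ pointwise in $u$. Then I would take $\sup_{u\in\R}$ on both sides; since the $O(r_n)$ term does not depend on $u$, it can be pulled out, and what remains inside the supremum is precisely $|E[\varphi_u(W_n)]|$. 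Absorbing the factor $\alpha$ into the implicit constant in $O(r_n)$, this yields the stated bound $\sup_u |f_n(u)-\phi_\alpha(u)| \le O(r_n)\kappa_\alpha$, with $\kappa_\alpha = \sup_{u>0}\sup_n |E[\varphi_u(W_n)]|$, as soon as we know $\kappa_\alpha$ is finite.

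The real content is therefore the finiteness of $\kappa_\alpha$, and here I would invoke the explicit representation already given in the excerpt, namely
\begin{equation*}
E[\varphi_u(W_n)] = \phi_\alpha(u)\left(\frac{1 - (O(r_n)+1)\,e^{-O(r_n)u^{-\alpha}}}{\alpha\,O(r_n)}\right).
\end{equation*}
As $r_n \to 0$, a first-order Taylor expansion of the numerator in the small parameter $O(r_n)$ shows that the bracketed factor behaves like $\tfrac{1}{\alpha}(u^{-\alpha}-1)$ plus lower-order terms that vanish with $r_n$, so the quantity is uniformly $O(\phi_\alpha(u)(1+u^{-\alpha}))$. The function $u \mapsto \phi_\alpha(u)(1+u^{-\alpha}) = e^{-u^{-\alpha}}(1+u^{-\alpha})$ is bounded on $(0,\infty)$ (it tends to $0$ as $u\to 0^{+}$ and to $1$ as $u\to\infty$, with a single interior maximum). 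This gives the desired uniform bound, and checking the extreme cases $u\to 0$ and $u\to\infty$ handles the remaining endpoints where $\phi_\alpha$ degenerates.

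The main obstacle, and essentially the only nontrivial step, is the uniformity in $n$ inside the supremum defining $\kappa_\alpha$: the explicit expression involves $O(r_n)$ symbols in both numerator and denominator, so one has to be careful that the ratio stays bounded as $r_n\to 0$. I would handle this by making the $O(r_n)$ notation quantitative, writing $O(r_n) = c_n r_n$ with $c_n$ uniformly bounded (this is what the hypothesis \eqref{eq:3} supplies), and then applying the elementary inequality $|1-(1+t)e^{-ts}| \le C\,|t|\,(1+s)$ valid for $|t|$ small and $s\ge 0$ with a universal $C$. This makes the cancellation of $O(r_n)$ between numerator and denominator rigorous and yields the uniform bound $\kappa_\alpha < \infty$, thereby completing the proof of the corollary.
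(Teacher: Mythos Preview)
Your proposal is correct and follows exactly the route the paper sketches: the corollary is stated without a separate proof, immediately after the local-limit identity and the explicit representation for $E[\varphi_u(W_n)]$, with only the bare assertion that the bracketed quantity is uniformly bounded in $u$---you have simply filled in that assertion with the Taylor/uniformity argument, which is more than the paper actually writes out. One small slip to fix: $\phi_\alpha(u)=\alpha u^{-\alpha-1}e^{-u^{-\alpha}}$, not $e^{-u^{-\alpha}}$, so both endpoint limits of $\phi_\alpha(u)(1+u^{-\alpha})$ are $0$ rather than $1$; the boundedness conclusion is unaffected.
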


\section{Acknowledgements}
\label{sec:acknowledgements}

The research of Carine Bartholm\'e is supported by a F.R.I.A. grant
from the Fonds  de la Recherche Scientifique - FNRS (Belgium). 

\bibliographystyle{model2-names.bst}
\bibliography{../../../Bibliography/biblio_ys_stein.bib}

\end{document}